\newtheorem{theorem}{Theorem}[section]
\newtheorem{lemma}{Lemma}[section]
\newtheorem{remark}{Remark}[section]
\newcommand{\p}{\mathbb{P}}
\newcommand{\e}{\mathbb{E}}
\newcommand{\reals}{\mathbb{R}}
\newcommand{\ind}{\mathbf{1}}
\newcommand{\Zq}[1]{Z_{p,q} \left(#1 \right)}
\newcommand{\Zqprime}[1]{Z_{p,q}^\prime \left(#1 \right)}
\newcommand{\Zqdoubleprime}[1]{Z_{p,q}^{\prime \prime} \left(#1 \right)}
\newcommand{\ruintime}{\sigma_p^\pi}
\newcommand{\admissible}{\Pi_{\beta,p}}
\begin{document}

\title[Impulse control with Parisian ruin for SNLPs]{A note on the optimal dividends problem with transaction costs in a spectrally negative Lévy model with Parisian ruin}

\author[J.-F. Renaud]{Jean-Fran\c{c}ois Renaud}
\address{D\'epartement de math\'ematiques, Universit\'e du Qu\'ebec \`a Montr\'eal (UQAM), 201 av.\ Pr\'esident-Kennedy, Montr\'eal (Qu\'ebec) H2X 3Y7, Canada}
\email{renaud.jf@uqam.ca}


\date{\today}

\keywords{Impulse control, optimal dividends, Parisian ruin, spectrally negative L\'{e}vy processes, log-convexity.}

\begin{abstract}
In this note, merging ideas from \cite{loeffen_2009b} and \cite{renaud_2019}, we prove that an $(a,b)$-strategy maximizes dividend payments subject to fixed transaction costs in a spectrally negative Lévy model with Parisian ruin, as long as the tail of the Lévy measure is log-convex.
\end{abstract}

\maketitle


\section{Introduction}

Maximizing dividend payments is a trade-off between paying out (as early and) as much as possible while trying to avoid ruin for the possibility of more payments in the future. Classically, three versions of this problem have been studied: singular, absolutely continuous and impulse, which are related to the corresponding set of admissible control strategies. See, e.g., \cite{jeanblanc-shiryaev_1995} for an analysis of these three problems in a Brownian model. Later, these problems have been studied in more general models; in a spectrally negative Lévy model, they have been studied for example in \cites{avram-et-al_2007,loeffen_2008}, \cite{kyprianou-et-al_2012} and \cites{alvarez-rakkolainen_2009,loeffen_2009b, thonhauser-albrecher_2011}, respectively. See also \cite{albrecher-thonhauser_2009} for a survey on optimal dividend strategies.

In the above control problems, the termination time is the classical ruin time or, more precisely, the first time the controlled process goes below zero (or another pre-specified critical level). For more than a decade now, \textit{exotic} definitions of ruin have been studied. One of them is Parisian ruin, with exponential clocks or with deterministic delays. Naturally, the effect of a given type of Parisian ruin on the maximization of dividends has been considered. In a spectrally negative Lévy model, the singular control problem with Parisian ruin has been studied in \cite{czarna-palmowski_2014} and \cite{renaud_2019}, for deterministic delays and exponential delays respectively, while the absolutely continuous version with Parisian ruin is analyzed in \cite{locas-renaud_2023}.

In this note, we aim at \textit{completing the trilogy} by considering the impulse control problem in a spectrally negative Lévy model with Parisian ruin. Our main contribution is a solution to this problem, in the spirit of recent literature. More precisely, we give a fairly general condition on the Lévy measure of the underlying spectrally negative Lévy process under which a simple impulse control strategy is optimal. The condition is that the tail of the Lévy measure is log-convex. It turns out that this condition also improves the analysis originally provided in \cite{loeffen_2009b} (see \cite{loeffen-renaud_2010} for a discussion) when standard ruin is considered. It is also the condition imposed in the solution of the singular control problem in \cite{renaud_2019}. Our solution is based on techniques used and results obtained in both \cite{loeffen_2009b} and \cite{renaud_2019}. As a consequence, mathematical details will be kept at a minimum when standard arguments are involved.

\section{Problem formulation and verification lemma}

On a filtered probability space $\left( \Omega, \mathcal{F}, \left\lbrace \mathcal{F}_t, t \geq 0 \right\rbrace, \p \right)$, let $X=\left\lbrace X_t , t \geq 0 \right\rbrace$ be a spectrally negative Lévy process (SNLP) with Laplace exponent
\[
\theta \mapsto \psi(\theta) = \gamma \theta + \frac{1}{2} \sigma^2 \theta^2 + \int^{\infty}_0 \left( \mathrm{e}^{-\theta z} - 1 + \theta z \ind_{(0,1]}(z) \right) \nu(\mathrm{d}z) ,
\]
where $\gamma \in \reals$ and $\sigma \geq 0$, and where $\nu$ is a sigma-finite measure on $(0,\infty)$ satisfying
\begin{equation*}
\int^{\infty}_0 (1 \wedge x^2) \nu(\mathrm{d}x) < \infty .
\end{equation*}
This measure is called the L\'{e}vy measure of $X$. As $X$ is a (strong) Markov process, its law when starting from $X_0 = x$ will be denoted by $\p_x$ and the corresponding expectation by $\e_x$. If $X_0=0$, we will simply write $\p$ and $\e$. With this notation, we thus have $\psi(\theta)=\ln \left(\e \left[\mathrm{e}^{\theta X_1} \right] \right)$.

Let us model a cash process using $X$. We consider the optimization problem in which, at each dividend payment, a fixed transaction cost of $\beta>0$ is paid by the policyholders. Therefore, a dividend strategy $\pi$ is represented by a non-decreasing, left-continuous and adapted (thus predictable) stochastic process $L^\pi = \left\lbrace L^\pi_t , t \geq 0 \right\rbrace$ such that $L^\pi_0 = 0$ and
\begin{equation}\label{E:lump-sum}
L^\pi_t = \sum_{0 \leq s < t} \Delta L^\pi_s ,
\end{equation}
where $\Delta L^\pi_s = L^\pi_{s+} - L^\pi_s$, which represents the cumulative amount of dividends paid up to time $t$. The assumption in~\eqref{E:lump-sum} means that all dividend payments are lump sum dividend payments, which is in contrast with the singular version of the problem. Mathematically speaking, the process $L^\pi$ is a pure-jump process. In particular, $\sum_{0 \leq s < t} \ind_{\{\Delta L^\pi_s > 0 \}}$ represents the number of dividend payments made up to time $t$.

For a strategy $\pi$, the controlled cash process $U^\pi = \left\lbrace U^\pi_t , t \geq 0 \right\rbrace$ is given by $U^\pi_t = X_t - L^\pi_t$. In our control problem, the termination time is given by the time of Parisian ruin (with fixed rate $p>0$) defined by
\[
\ruintime = \inf \left\lbrace t>0 \colon t-g_t^\pi > \mathbf{e}_p^{g_t^\pi} \; \text{and} \; U^\pi_t < 0 \right\rbrace ,
\]
where $g_t^\pi = \sup \left\lbrace 0 \leq s \leq t \colon U^\pi_s \geq 0 \right\rbrace$, with $\mathbf{e}_p^{g_t^\pi}$ an independent random variable, following the exponential distribution with mean $1/p$, associated to the corresponding excursion below $0$.

A strategy $\pi$ is said to be admissible if $\Delta L^\pi_t \leq U^\pi_t$, for all $t < \ruintime$. In words, a dividend payment should not push the cash process down into the \textit{red zone} (below zero). As a consequence, no dividends can be paid when $U^\pi$ is below zero because the dividend process $L^\pi$ is assumed to be non-decreasing. Let us denote the set of admissible dividend strategies by $\admissible$. For a fixed discount rate $q \geq 0$, the performance function of an admissible dividend strategy $\pi \in \admissible$ is given by
\[
v_\pi (x) = \e_x \left[ \sum_{0 \leq t < \ruintime} \mathrm{e}^{-q t} \left( \Delta L^\pi_t - \beta \ind_{\{\Delta L^\pi_t > 0 \}} \right) \right] , \quad x \in \reals .
\]
Therefore, the value function $v_\ast$ of the problem is defined on $\reals$ by $v_\ast (x) = \sup_{\pi \in \admissible} v_\pi (x)$.

For the rest of the paper, we assume that the control problem parameters $\beta, p, q$ are fixed. Our goal is to obtain an analytical expression for $v_\ast$ by finding an optimal strategy $\pi_\ast \in \admissible$, i.e., such that $v_{\pi_\ast} (x) = v_\ast (x)$, for all $x \in \reals$, and for which we can compute the performance function.

\begin{remark}
Note that the performance function, which is now defined on the whole real line, can also be written as follows:
\[
v_\pi (x) = \e_x \left[ \int_0^{\ruintime} \mathrm{e}^{-q t} \mathrm{d} \left( L^\pi_t - \beta \sum_{0 \leq s < t} \ind_{\{\Delta L^\pi_s > 0 \}} \right) \right] .
\]
\end{remark}

Define the operator
\begin{equation}\label{eq:generator}
\Gamma v(x) = \gamma v^\prime (x)+\frac{\sigma^2}{2} v^{\prime \prime}(x) + \int_{0+}^\infty \left( v(x-z)-v(x)+v^\prime (x) z \ind_{(0,1]}(z) \right) \nu(\mathrm{d}z)
\end{equation}
acting on functions $v$, defined on $\reals$, such that $x \mapsto \Gamma v(x)$ is defined almost everywhere.

We say that a function $v$ is sufficiently smooth if it is continuously differentiable on $(0,\infty)$ when $X$ is of bounded variation and piecewise twice continuously differentiable when $X$ is of unbounded variation.

Here is a verification lemma for our maximization problem:
\begin{lemma}\label{verificationlemma}
Suppose that $\hat{\pi} \in \admissible$ is such that $v_{\hat{\pi}}$ is sufficiently smooth. If, for almost every $x \in \reals$,
\[
\Gamma v_{\hat{\pi}} (x) - \left( q+p \ind_{(-\infty,0)}(x) \right) v_{\hat{\pi}} (x) \leq 0
\]
and if, for all $y \geq x \geq 0$, 
\[
v_{\hat{\pi}}(y)-v_{\hat{\pi}}(x) \geq y-x-\beta ,
\]
then $\hat{\pi}$ is an optimal strategy for the control problem.
\end{lemma}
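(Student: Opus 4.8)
The plan is to show that $v_{\hat\pi}(x) \ge v_\pi(x)$ for every $\pi \in \admissible$ and every $x \in \reals$; since $\hat\pi$ is itself admissible, this yields $v_{\hat\pi} = v_\ast$ and hence optimality. The starting point is the standard reduction of Parisian ruin with exponential clocks to occupation-time killing: conditioning on the path of $U^\pi$ and integrating out the independent exponential clocks attached to the successive excursions below zero, the conditional probability of no Parisian ruin before time $t$ equals $\exp(-p\int_0^t \ind_{(-\infty,0)}(U^\pi_s)\,\mathrm{d}s)$. I would use this to rewrite the performance function as
\[
v_\pi(x) = \e_x\left[\sum_{0 \le t} D^\pi_t \left( \Delta L^\pi_t - \beta \ind_{\{\Delta L^\pi_t > 0\}} \right)\right], \qquad D^\pi_t := \exp\left(-qt - p\int_0^t \ind_{(-\infty,0)}(U^\pi_s)\,\mathrm{d}s\right),
\]
so that the Parisian killing is absorbed into a multiplicative discount factor and the termination time $\ruintime$ no longer appears explicitly.

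Writing $W := v_{\hat\pi}$, the core of the argument is to apply the change-of-variables (Itô--Meyer) formula to the discounted process $D^\pi_t\,W(U^\pi_t)$. Away from the jumps of $L^\pi$ the controlled process $U^\pi$ evolves as $X$, whose full generator is precisely the operator $\Gamma$ in~\eqref{eq:generator}; the \emph{sufficiently smooth} hypothesis on $W$ is exactly what is needed to justify this formula, the piecewise-$C^2$ requirement handling the local-time terms in the unbounded-variation case. This produces a decomposition of $D^\pi_t\,W(U^\pi_t)$ into the initial value $W(x)$, an absolutely continuous drift term $\int_0^t D^\pi_s\big(\Gamma W - (q + p\ind_{(-\infty,0)})W\big)(U^\pi_s)\,\mathrm{d}s$, a local martingale $M_t$ coming from the Gaussian and compensated-jump parts of $X$, and a sum over the control jumps of $W(U^\pi_{s+}) - W(U^\pi_s)$.

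Then I would invoke the two hypotheses. The generator inequality forces the drift term to be nonpositive. At a dividend payment time $s$ we have $U^\pi_s \ge 0$ and, by admissibility, $U^\pi_{s+} = U^\pi_s - \Delta L^\pi_s \ge 0$; applying the second hypothesis with $y = U^\pi_s$ and $x = U^\pi_{s+}$ gives $W(U^\pi_{s+}) - W(U^\pi_s) \le -\big(\Delta L^\pi_s - \beta\ind_{\{\Delta L^\pi_s > 0\}}\big)$, so the decrease of $W$ at each control jump is at least the corresponding discounted net dividend. Combining these bounds yields
\[
D^\pi_t\,W(U^\pi_t) \le W(x) + M_t - \sum_{0 \le s \le t} D^\pi_s \big(\Delta L^\pi_s - \beta \ind_{\{\Delta L^\pi_s > 0\}}\big).
\]

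Finally, I would take expectations along a localizing sequence of stopping times to neutralize the local martingale, and then let $t \to \infty$: monotone/dominated convergence identifies the discounted dividend sum with $v_\pi(x)$, so the conclusion $v_\pi(x) \le W(x)$ follows provided the boundary term $\e_x[D^\pi_t\,W(U^\pi_t)]$ is asymptotically nonnegative, which I would obtain from the discounting together with a lower bound on $W$ (such as $W \ge 0$ with at most linear growth). The main obstacle is precisely this analytic bookkeeping at the two ends of the argument: rigorously applying the Meyer--Itô formula across the points where $v_{\hat\pi}$ is only piecewise twice differentiable, checking that the local-time contributions have the right sign or vanish, and controlling the limit of the boundary term, the latter being most delicate when $q = 0$, where one must lean on the Parisian killing rate $p > 0$ rather than on the discount rate to make $\e_x[D^\pi_t\,W(U^\pi_t)]$ vanish.
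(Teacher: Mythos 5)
Your proposal is correct and follows essentially the same route as the paper: an It\^o-type (change-of-variables) formula applied to the occupation-time-discounted process $\mathrm{e}^{-qt-p\int_0^t \ind_{(-\infty,0)}(U^\pi_r)\,\mathrm{d}r}\,v_{\hat\pi}(U^\pi_t)$, the generator inequality to kill the drift, the transaction-cost inequality to dominate the control jumps, and the identification of the occupation-time discount factor with the conditional survival probability of the Parisian clock (which the paper implements via an independent Poisson process) before localizing and passing to the limit. The only difference is cosmetic ordering --- you absorb the Parisian killing into the discount factor at the outset, whereas the paper does so after the It\^o step --- so there is nothing substantive to distinguish the two arguments.
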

\begin{proof}
Set $w:=v_{\hat{\pi}}$ and let $\pi \in \admissible$ be an arbitrary admissible strategy. We will use the main ingredients in the proof of the verification lemma  in \cite{renaud_2019}. Let us first apply a change-of-variable formula or an Ito-type formula (depending on the level of smoothness of $w$) to the multidimensional process $\left( t, \int_0^t \ind_{(-\infty,0)}(U^\pi_r) \mathrm{d}r , U^\pi_t \right)$. Consequently, for $t > 0$, we can write
\begin{multline*}
\mathrm{e}^{-q t - p \int_0^t \ind_{(-\infty,0)}(U^\pi_r) \mathrm{d}r} w \left(U^\pi_t \right) - w \left(U^\pi_0 \right) \\
\leq \int_0^t \mathrm{e}^{-q s - p \int_0^s \ind_{(-\infty,0)}(U^\pi_r) \mathrm{d}r} \left[ \Gamma w \left(U^\pi_s \right) -q w \left(U^\pi_s \right) - p \ind_{(-\infty,0)} (U^\pi_s) w \left(U^\pi_s \right) \right] \mathrm{d}s \\
- \sum_{0 < s < t} \mathrm{e}^{-q s - p \int_0^s \ind_{(-\infty,0)}(U^\pi_r) \mathrm{d}r} \left(\Delta L^\pi_s - \beta \ind_{\{\Delta L^\pi_s > 0\}} \right) .
\end{multline*}
To obtain this last inequality, we used the fact that the jumps of $U^\pi$ can come from either $X$ or $L^\pi$ and that $L^\pi$ is a pure-jump process. Also, we used one of the assumptions yielding
\[
w \left(U^\pi_s - \Delta L^\pi_s  \right) - w \left(U^\pi_s \right) < - \Delta L^\pi_s + \beta
\]
at a time $s$ for which $\Delta L^\pi_s > 0$. 

Second, let us consider an independent (of the sigma-algebra $\mathcal{F}_\infty := \sigma \left( \cup_{i \geq 0} \mathcal{F}_i \right)$) Poisson process with intensity measure $p \, \mathrm{d}t$ and jump times $\left\lbrace T^p_i , i \geq 1 \right\rbrace$, allowing us to write
\[
\mathrm{e}^{- p \int_0^s \ind_{(-\infty,0)}(U^\pi_r) \mathrm{d}r} = \p_x \left( T^p_i \notin \left\lbrace r \in (0,s] \colon U^\pi_r < 0 \right\rbrace , \; \text{for all $i \geq 1$} \vert \mathcal{F}_\infty \right) = \e_x \left[ \ind_{\{\sigma_p^\pi > s\}} \vert \mathcal{F}_\infty \right] .
\]
Consequently,
\[
\e_x \left[ \sum_{0 < s < t} \mathrm{e}^{-q s - p \int_0^s \ind_{(-\infty,0)}(U^\pi_r) \mathrm{d}r} \left(\Delta L^\pi_s - \beta \ind_{\{\Delta L^\pi_s > 0\}} \right) \right] \\
= \e_x \left[ \sum_{0<s<\sigma_p^\pi \wedge t} \mathrm{e}^{-q s} \left(\Delta L^\pi_s - \beta \ind_{\{\Delta L^\pi_s > 0\}} \right) \right] ,
\]
where we used that $X$ and $L^\pi$ are both adapted to the filtration. 

Finally, since for all $x \in \reals$ we have $\left(\Gamma-q-p\ind_{(-\infty,0)}(x) \right) w (x) \leq 0$, then, using standard arguments, we obtain
\begin{multline*}
w(x) \geq \e_x \left[ \sum_{s>0} \mathrm{e}^{-q s - p \int_0^s \ind_{(-\infty,0)}(U^\pi_r) \mathrm{d}r} \left(\Delta L^\pi_s - \beta \ind_{\{\Delta L^\pi_s > 0\}} \right) \right] \\
= \e_x \left[ \sum_{0<s<\sigma_p^\pi} \mathrm{e}^{-q s} \left(\Delta L^\pi_s - \beta \ind_{\{\Delta L^\pi_s > 0\}} \right) \right] = v_\pi(x) .
\end{multline*}
\end{proof}

\section{A family of simple impulse strategies}

For fixed values $0 \leq a < b$,  the corresponding $(a,b)$-strategy is defined as follows: each time the controlled process $U^{a,b}$ crosses level $b$, a dividend payment of $b-a$ is made. Mathematically speaking, the corresponding dividend process is defined by: $L^{a,b}_{0+}=\left(X_0-a \right) \ind_{\{X_0 > b\}}$ and, for $t > 0$,
\[
L^{a,b}_t = L^{a,b}_{0+} + \sum_{k=2}^\infty (b-a) \ind_{\{T_k^{a,b}<t\}} ,
\]
where $T_k^{a,b} := \inf \left\lbrace t > 0 \colon X_t > X_0 \wedge b + (b-a)(k-1) \right\rbrace$, and then we set $U^{a,b}_t := X_t-L^{a,b}_t$. The performance function $v_{a,b}$ of this strategy can be expressed analytically using scale functions of the underlying SNLP.

First, recall that scale functions $\left\lbrace W^{(q)}, q \geq 0 \right\rbrace$ are equal to zero on $(-\infty,0)$ and are given by
$$
\int_0^\infty \mathrm{e}^{-\theta x} W^{(q)}(x) \mathrm{d}x = \left(\psi(\theta)-q \right)^{-1} ,
$$
for all $\theta> \Phi(q)=\sup \left\lbrace \lambda \geq 0 \colon \psi(\lambda)=q \right\rbrace$, on $[0,\infty)$.  Also, we need to define
\[
x \mapsto \Zq{x} = p \int_0^\infty \mathrm{e}^{-\Phi(p+q) y} W^{(q)}(x+y) \mathrm{d}y , \quad x \in \reals .
\]
Then, for $x > 0$, we have
\[
\Zqprime{x} = p \int_0^\infty \mathrm{e}^{-\Phi(p+q) y} W^{(q)\prime}(x+y) \mathrm{d}y ,
\]
which is well defined since $W^{(q)}$ is differentiable almost everywhere.

Now, the performance function of an admissible $(a,b)$-strategy can be easily computed using standard Markovian-type arguments and Parisian first-passage identities for $X$.

\begin{lemma}\label{L:performance}
For a fixed pair $(a,b)$ such that  $a \geq 0$ and $b > a + \beta$, we have
\[
v_{a,b} (x) =
\begin{cases}
\left( \frac{b-a-\beta}{\Zq{b}-\Zq{a}} \right) \Zq{x} & \text{for $x \in (-\infty,b]$,}\\
x-a-\beta + \left( \frac{b-a-\beta}{\Zq{b}-\Zq{a}} \right) \Zq{a} & \text{for $x \in (b,\infty)$.}
\end{cases}
\]
\end{lemma}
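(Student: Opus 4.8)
The plan is to compute $v_{a,b}$ directly from its probabilistic definition by exploiting the renewal structure of the $(a,b)$-strategy together with a Parisian first-passage identity for $X$. First I would record the single analytic ingredient needed, namely the identity
\[
\e_x \left[ \mathrm{e}^{-q \tau_b^+} \ind_{\{\tau_b^+ < \sigma_p \}} \right] = \frac{\Zq{x}}{\Zq{b}}, \qquad x \leq b,
\]
where $\tau_b^+ = \inf\{t > 0 \colon X_t > b\}$ is the first-passage time of the uncontrolled process above $b$ and $\sigma_p$ is the Parisian ruin time of $X$ itself. This is a standard Parisian fluctuation identity for spectrally negative Lévy processes and is precisely where the function $\Zq{\cdot}$ enters; it may be quoted from the existing literature.

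Next I would set up a one-step renewal equation for $x \in (-\infty,b]$. Until the first passage above $b$, the controlled process $U^{a,b}$ coincides with $X$, so no dividend is paid and the two Parisian clocks agree up to $\tau_b^+$; moreover, since $X$ has no positive jumps, $X_{\tau_b^+} = b$ on $\{\tau_b^+ < \infty\}$. At $\tau_b^+$ a net amount $b - a - \beta$ is collected and, by the strong Markov property, the process restarts afresh from level $a \geq 0$, where no excursion below zero is in progress. This yields
\[
v_{a,b}(x) = \e_x \left[ \mathrm{e}^{-q \tau_b^+} \ind_{\{\tau_b^+ < \sigma_p\}} \right] \left( b - a - \beta + v_{a,b}(a) \right).
\]
Evaluating at $x = a$ and solving the resulting scalar equation gives $v_{a,b}(a)$, and substituting this back, together with the first-passage identity, produces the claimed closed form on $(-\infty,b]$ after simplifying the factor $1 - \Zq{a}/\Zq{b}$.

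Finally, for $x \in (b,\infty)$, the strategy pays an immediate lump sum bringing the cash process from $x$ down to $a$, contributing the net amount $x - a - \beta$, after which the dynamics coincide with those started from $a$; hence $v_{a,b}(x) = x - a - \beta + v_{a,b}(a)$, which matches the stated formula once $v_{a,b}(a)$ is inserted. Along the way I would briefly check that the strategy is admissible (the payment at level $b$ leaves the process at $a \geq 0$ and satisfies $b - a \leq b$) and that the assumption $b > a + \beta$ guarantees strictly positive net payments, so the renewal argument is well defined. The main obstacle is the first-passage identity: all the probabilistic content sits there, and once it is in hand the remainder is the elementary solution of the renewal equation.
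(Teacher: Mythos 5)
Your proposal is correct and follows essentially the same route as the paper: the Parisian first-passage identity $\e_x \left[ \mathrm{e}^{-q \tau_b^+} \ind_{\{\tau_b^+ < \kappa^p\}} \right] = \Zq{x}/\Zq{b}$ (quoted from the literature), the strong Markov renewal equation $v_{a,b}(x) = \frac{\Zq{x}}{\Zq{b}}\left(b-a-\beta+v_{a,b}(a)\right)$, and solving at $x=a$. The only difference is that you spell out the $x>b$ case and the admissibility check, which the paper leaves implicit.
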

\begin{proof}
As the controlled process $U^{a,b}$ is allowed to go below zero, we modify the proof of Proposition~2 in \cite{loeffen_2009b} as follows. As in the proof of Proposition~1 in \cite{renaud_2019}, we define
\[
\kappa^p = \inf \left\lbrace t>0 \colon t-g_t > \mathbf{e}_p^{g_t} \; \text{and} \; X_t < 0 \right\rbrace ,
\]
where $g_t = \sup \left\lbrace 0 \leq s \leq t \colon X_s \geq 0 \right\rbrace$. This is the time of Parisian ruin with rate $p$ for $X$. Also, let us define the stopping time $\tau_b^+ = \inf \left\lbrace t>0 \colon X_t >b \right\rbrace$. Using Equation (16) in~\cite{lkabous-renaud_2019}, we can write, for $x \in (-\infty, b]$,
\[
\e_x \left[ \mathrm{e}^{-q \tau_b^+} \ind_{\{\tau_b^+<\kappa^p\}} \right] = \frac{\Zq{x}}{\Zq{b}} .
\]
Consequently, for $x \in (-\infty,b]$,  using the strong Markov property, we can write
\[
v_{a,b} (x) = \frac{\Zq{x}}{\Zq{b}} v_{a,b} (b) = \frac{\Zq{x}}{\Zq{b}} \left( b-a-\beta + v_{a,b} (a) \right) .
\]
When $x=a$, we can solve for $v_{a,b} (a)$ and the result follows.
\end{proof}

\begin{remark}
Note that $v_{a,b}$ is not necessarily continuously differentiable at $x=b$.
\end{remark}

The following lemma was proved in \cite{renaud_2019}.
\begin{lemma}\label{lemma:log-convexity}
If the tail of the Lévy measure is log-convex, then $\Zqprime{\cdot}$ is log-convex. In particular, $\Zqdoubleprime{\cdot}$ exists and is continuous almost everywhere.
\end{lemma}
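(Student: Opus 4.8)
The plan is to split the statement into two independent pieces: a transfer of log-convexity from the tail $\overline{\nu}(\cdot):=\nu((\cdot,\infty))$ to $W^{(q)\prime}$, and then a stability argument carrying log-convexity from $W^{(q)\prime}$ to $\Zqprime{\cdot}$ through the integral representation recalled above. Throughout I would work on $(0,\infty)$, where $W^{(q)}$ is strictly increasing so that $\Zqprime{x}>0$ and $\log \Zqprime{x}$ is well defined; recall also that $W^{(q)\prime}$ vanishes on $(-\infty,0)$, so for $x>0$ and $y\geq 0$ the integrand below is always evaluated at $x+y>0$, i.e.\ in the region of interest.

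Granting for the moment that $W^{(q)\prime}$ is log-convex on $(0,\infty)$, the stability step is a short Hölder argument. Writing $\Phi:=\Phi(p+q)$ and fixing $\lambda\in(0,1)$ and $x_1,x_2>0$, log-convexity of $W^{(q)\prime}$ gives, for each $y\geq 0$,
\[
W^{(q)\prime}\left(\lambda x_1+(1-\lambda)x_2+y\right)\leq \left(W^{(q)\prime}(x_1+y)\right)^{\lambda}\left(W^{(q)\prime}(x_2+y)\right)^{1-\lambda}.
\]
Inserting this into the defining integral for $\Zqprime{\cdot}$ and factoring the kernel as $p\,\mathrm{e}^{-\Phi y}=\left(p\,\mathrm{e}^{-\Phi y}\right)^{\lambda}\left(p\,\mathrm{e}^{-\Phi y}\right)^{1-\lambda}$, an application of Hölder's inequality with conjugate exponents $1/\lambda$ and $1/(1-\lambda)$ yields
\[
\Zqprime{\lambda x_1+(1-\lambda)x_2}\leq \left(\Zqprime{x_1}\right)^{\lambda}\left(\Zqprime{x_2}\right)^{1-\lambda},
\]
which is precisely log-convexity of $\Zqprime{\cdot}$. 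In essence this records the elementary fact that a positively weighted superposition (here, an integral against the finite positive measure $p\,\mathrm{e}^{-\Phi y}\,\mathrm{d}y$) of log-convex functions is again log-convex.

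The main obstacle is the first piece: showing that log-convexity of the tail $\overline{\nu}$ forces $W^{(q)\prime}$ to be log-convex. This is where all the probabilistic structure of $X$ enters, and it does not follow from soft arguments; it rests on the scale-function theory (renewal-type and series representations of $W^{(q)}$) through which convexity properties of the Lévy measure are transmitted to the derivative of the scale function. I would invoke this transfer from the existing literature (it is the analytic heart of \cite{renaud_2019}, building on the analysis in \cite{loeffen_2009b, loeffen-renaud_2010}) rather than reprove it here.

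The final assertion is then immediate. A log-convex function is convex, so $\Zqprime{\cdot}$ is convex on $(0,\infty)$; a convex function is differentiable off an at most countable set, with nondecreasing derivative there. Hence $\Zqdoubleprime{\cdot}$ exists almost everywhere, and being monotone it can have at most countably many discontinuities, so it is continuous almost everywhere.
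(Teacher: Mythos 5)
Your proof is correct and follows essentially the same route as the source the paper itself defers to for this lemma (the paper offers no proof, only the citation to \cite{renaud_2019}): the transfer of log-convexity from the tail of $\nu$ to $W^{(q)\prime}$ is quoted from the literature, and log-convexity then passes to $\Zqprime{\cdot}$ because it is a positive integral mixture of the log-convex functions $x \mapsto W^{(q)\prime}(x+y)$, which is exactly your H\"older step. Your concluding deduction --- log-convex implies convex, so $\Zqdoubleprime{\cdot}$ exists off a countable set, is nondecreasing, and is therefore continuous almost everywhere --- is also correct.
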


Next, our objective is to find an optimal pair $(a^\ast,b^\ast)$, that is such that, for any other admissible $(a,b)$-strategy, we have $v_{a^\ast,b^\ast} (x) \geq v_{a,b} (x)$ for all $x \in \reals$.

\begin{lemma}
If the tail of the Lévy measure is log-convex, then there exists a unique optimal pair $(a^\ast,b^\ast)$. Also, we have that $a^\ast \wedge 0 \leq c^\ast < b^\ast$, where
\[
c^\ast := \sup \left\lbrace c \geq 0 \colon \Zqprime{c} \leq \Zqprime{x} , \; \text{for all $x \geq 0$} \right\rbrace \in [0,\infty) .
\]
\end{lemma}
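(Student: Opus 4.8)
The plan is to reduce the problem to a two-variable maximization and then exploit the convexity of $\Zqprime{\cdot}$ furnished by Lemma~\ref{lemma:log-convexity}. By Lemma~\ref{L:performance}, for every admissible pair and every $x\in(-\infty,b]$ one has $v_{a,b}(x)=H(a,b)\,\Zq{x}$, where
\[
H(a,b):=\frac{b-a-\beta}{\Zq{b}-\Zq{a}} .
\]
Since $\Zq{\cdot}$ is strictly positive and strictly increasing and does not depend on $(a,b)$, comparing two strategies on a common range $x\le b\wedge b'$ amounts to comparing the scalars $H(a,b)$; the comparison on the remaining (linear) range $x>b$ then follows from the explicit expressions in Lemma~\ref{L:performance} by a routine computation. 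Hence finding the optimal pair is equivalent to maximizing $H$ over the open region $D:=\{(a,b):a\ge0,\ b>a+\beta\}$, and I would carry out this maximization.

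First I would settle existence. The map $H$ is continuous and strictly positive on $D$; as $(a,b)$ approaches the face $b=a+\beta$ the numerator vanishes, while the exponential growth of $\Zq{\cdot}$ (inherited from $W^{(q)}(x)\sim \mathrm{e}^{\Phi(q)x}/\psi'(\Phi(q))$) forces $H\to0$ as $a$ or $b$ tends to infinity. Thus a maximizer $(a^\ast,b^\ast)$ exists, with $a^\ast\ge0$. The first-order condition in $b$ always holds and reads $\Zq{b^\ast}-\Zq{a^\ast}=(b^\ast-a^\ast-\beta)\Zqprime{b^\ast}$, giving $\Zqprime{b^\ast}=1/H(a^\ast,b^\ast)$; if $a^\ast>0$ the first-order condition in $a$ gives in addition $\Zqprime{a^\ast}=\Zqprime{b^\ast}$, while if $a^\ast=0$ one only has the inequality $\Zqprime{0}\le\Zqprime{b^\ast}$. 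To locate $b^\ast$, I would rewrite the $b$-condition as $\Zqprime{b^\ast}=\tfrac{1}{b^\ast-a^\ast-\beta}\int_{a^\ast}^{b^\ast}\Zqprime{t}\,\mathrm{d}t$; since $\beta>0$ this is strictly larger than the average of $\Zqprime{\cdot}$ over $[a^\ast,b^\ast]$. As $\Zqprime{\cdot}$ is convex and hence non-increasing on $[0,c^\ast]$, the inclusion $[a^\ast,b^\ast]\subseteq[0,c^\ast]$ is impossible, for it would force $\Zqprime{b^\ast}$ to equal the minimum over the interval, below its average; therefore $b^\ast>c^\ast$, and correspondingly $a^\ast\le c^\ast$, which yields $a^\ast\wedge0\le c^\ast<b^\ast$.

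The remaining and main difficulty is uniqueness, and this is where I would use the structure of $\Zqprime{\cdot}$ most heavily. Writing $g:=\Zqprime{\cdot}$ and $m_0:=\min_{x\ge0}g(x)=g(c^\ast)$, I would parametrize candidate optimizers by the common level $m=1/H\in(m_0,\infty)$: let $b(m)>c^\ast$ be the unique point with $g(b(m))=m$ (well defined and strictly increasing because $g$ is strictly increasing on $(c^\ast,\infty)$), and let $a(m)$ be the left preimage of $m$ when $m\le g(0)$ and $a(m)=0$ otherwise. The key point is that the single equation
\[
\Psi(m):=\beta m-\int_{a(m)}^{b(m)}\bigl(m-g(t)\bigr)\,\mathrm{d}t=0
\]
encodes the interior condition $g(a^\ast)=g(b^\ast)$ together with the $b$-condition when $m\le g(0)$, and reduces to the correct boundary condition at $a^\ast=0$ when $m>g(0)$. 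Differentiating, the endpoint contributions cancel because $g(a(m))=g(b(m))=m$ (and $a(m)$ is constant on the boundary regime), leaving the clean identity $\Psi'(m)=\beta-\bigl(b(m)-a(m)\bigr)$.

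Since $b(m)$ is strictly increasing and $a(m)$ is non-increasing, the width $b(m)-a(m)$ is strictly increasing, so $\Psi$ is strictly concave; as $\Psi(m)\to\beta m_0>0$ when $m\downarrow m_0$ and $\Psi(m)\to-\infty$ as $m\to\infty$, the function $\Psi$ has exactly one zero $m^\ast$. Because every maximizer must satisfy $\Psi(m)=0$, the global maximizer is the unique pair $(a^\ast,b^\ast)=(a(m^\ast),b(m^\ast))$. The delicate point I would watch is the strict monotonicity needed for strict concavity of $\Psi$: it is guaranteed by the strict increase of $g=\Zqprime{\cdot}$ on the right branch $(c^\ast,\infty)$, which follows from the convexity of $g$ and the very definition of $c^\ast$ as the supremum of the set of minimizers. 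The regularity and convexity of $\Zqprime{\cdot}$ used throughout are precisely what Lemma~\ref{lemma:log-convexity} provides under the log-convexity assumption.
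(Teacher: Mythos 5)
Your proposal is correct in its main thrust but follows a genuinely different, and more self-contained, route than the paper. The paper disposes of both existence and uniqueness by citation: existence of a pair satisfying the first-order conditions \eqref{E:identity-at-optimality} is imported from Proposition~3 of \cite{loeffen_2009b} (made applicable here because Lemma~\ref{lemma:log-convexity} gives continuous differentiability of $\Zq{\cdot}$), and uniqueness is deferred to Section~4 of \cite{loeffen_2009b}, the only new input being the fact from \cite{renaud_2019} that $\Zqprime{\cdot}$ is strictly increasing on $(c^\ast,\infty)$. You instead reprove existence directly from the boundary behaviour of $H$, derive the same first-order conditions, and --- this is the genuinely new part --- establish uniqueness by parametrizing critical pairs by the level $m=\Zqprime{b}$ and showing that the scalar equation $\Psi(m)=0$ has a single root. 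Your $\Psi$-argument is sound: writing $\int_{a(m)}^{b(m)}(m-g(t))\,\mathrm{d}t=\int_0^\infty (m-g(t))^+\,\mathrm{d}t$ makes the concavity and the formula $\Psi'(m)=\beta-\bigl(b(m)-a(m)\bigr)$ rigorous without needing $a(\cdot)$ and $b(\cdot)$ to be differentiable, strict monotonicity of the sublevel-set width follows from the strict increase of $\Zqprime{\cdot}$ on $(c^\ast,\infty)$, and a concave function that is positive near $m_0$ and tends to $-\infty$ has exactly one zero. Your localization $a^\ast\le c^\ast<b^\ast$ (via the mean-value rewriting of the $b$-condition) is also correct and in fact sharper than the stated $a^\ast\wedge 0\le c^\ast$.

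One step you should not label \emph{routine}: the reduction of ``optimal pair'' (pointwise domination $v_{a^\ast,b^\ast}(x)\ge v_{a,b}(x)$ for \emph{all} $x\in\reals$) to maximizing $H$. For $x\le b\wedge b'$ the equivalence is immediate, but for $x>\max(b,b^\ast)$ the required inequality is $H^\ast\Zq{a^\ast}-a^\ast\ge H(a,b)\Zq{a}-a$, and this does \emph{not} follow from $H(a,b)\le H^\ast$ alone: the majorant $a\mapsto H^\ast\Zq{a}-a$ is eventually increasing (exponential growth of $\Zq{\cdot}$ beats the linear term), so the naive comparison fails for large $a$ and one must use the structure of $\Zqprime{\cdot}$ again (essentially the content of the inequality $v(y)-v(x)\ge y-x-\beta$ proved later via Lemma~5 of \cite{loeffen_2009b}). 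In fairness, the paper's own proof is equally silent on this point --- the full pointwise domination is most cleanly obtained a posteriori from the verification theorem --- but if you keep the claim inside this lemma you should either restrict ``optimal pair'' to mean the maximizer of $H$ or supply that extra comparison explicitly.
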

\begin{proof}
Let us define the function
\[
g(x,y) = \frac{\Zq{y}-\Zq{x}}{y-x-\beta}
\]
on $\left\lbrace (x,y) \in \reals \times \reals \colon x \geq 0 , \, y>x+\beta \right\rbrace$. Under the assumption on the Lévy measure, thanks to Lemma~\ref{lemma:log-convexity}, we have that $\Zq{\cdot}$ is continuously differentiable. Consequently, the conclusion of Proposition~3 in \cite{loeffen_2009b} also holds in our setup. More precisely, there exists at least one pair $(\hat{a},\hat{b})$ in the domain of $g$ such that either $\hat{a}>0$ and $\Zqprime{\hat{a}}=\Zqprime{\hat{b}}$, or $\hat{a}=0$. Moreover, we have
\begin{equation}\label{E:identity-at-optimality}
\Zqprime{\hat{b}} = g(\hat{a},\hat{b}) ,
\end{equation}
in both cases. The proof of uniqueness goes also as in Section~4 of \cite{loeffen_2009b}, even with our relaxed condition on the Lévy measure. Indeed, it was proved in \cite{renaud_2019} that if the tail of the Lévy measure is log-convex, then $\Zqprime{\cdot}$ is strictly increasing on $(c^\ast,\infty)$ (in that paper, the notation $b_p^\ast$ is used instead of $c^\ast$). This, and the smoothness of $\Zq{\cdot}$ (cf.\ Lemma~\ref{lemma:log-convexity}), is all we need to conclude.
\end{proof}

\begin{remark}
Note that, under the assumptions of the previous lemma, the function $v_{a^\ast,b^\ast}$ is continuously differentiable at $x=b^\ast$.
\end{remark}

\section{An optimal strategy}

Here is our main result.

\begin{theorem}
If the tail of the Lévy measure is log-convex, then the impulse strategy given by $(a^\ast,b^\ast)$ is optimal and we have
\[
v_\ast (x) =
\begin{cases}
\frac{\Zq{x}}{\Zqprime{b^\ast}} & \text{for $x \in (-\infty,b^\ast]$,}\\
x-b^\ast + \frac{\Zq{a^\ast}}{\Zqprime{b^\ast}} & \text{for $x \in (b^\ast,\infty)$.}
\end{cases}
\]
\end{theorem}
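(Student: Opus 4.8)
The plan is to apply the verification Lemma~\ref{verificationlemma} to the candidate $w := v_{a^\ast,b^\ast}$, where $(a^\ast,b^\ast)$ is the unique optimal pair furnished by the previous lemma. First I would record the closed form of $w$: substituting the optimality identity \eqref{E:identity-at-optimality}, namely $\Zqprime{b^\ast} = (\Zq{b^\ast}-\Zq{a^\ast})/(b^\ast-a^\ast-\beta)$, into the expression of Lemma~\ref{L:performance} replaces the prefactor $(b^\ast-a^\ast-\beta)/(\Zq{b^\ast}-\Zq{a^\ast})$ by $1/\Zqprime{b^\ast}$, which yields exactly the two branches displayed in the statement (the upper branch coming from $x-a^\ast-\beta+\Zq{a^\ast}/\Zqprime{b^\ast}$ rewritten via the same identity). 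Since $a^\ast \geq 0$ and $b^\ast > a^\ast + \beta$, the pair is admissible, so $w \in \admissible$ is a genuine performance function. It remains to check that $w$ is sufficiently smooth and satisfies the two inequalities of Lemma~\ref{verificationlemma}. Smoothness is immediate away from $b^\ast$ (on $(-\infty,b^\ast)$ the function $w$ is a positive multiple of $\Zq{\cdot}$, which is continuously differentiable and, by Lemma~\ref{lemma:log-convexity}, piecewise twice continuously differentiable; on $(b^\ast,\infty)$ it is affine), while the continuous fit $w^\prime(b^\ast-)=\Zqprime{b^\ast}/\Zqprime{b^\ast}=1=w^\prime(b^\ast+)$ is the content of the Remark following the optimal-pair lemma.

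For the dividend inequality $w(y)-w(x)\geq y-x-\beta$, for $y\geq x\geq 0$, I would exploit that the optimal pair minimizes $g$. Because $v_{a,b}(x)=\Zq{x}/g(a,b)$ for $x\leq b$ and $\Zq{\cdot}>0$, optimality of $(a^\ast,b^\ast)$ forces $g(x,y)\geq g(a^\ast,b^\ast)=\Zqprime{b^\ast}$ for every pair $(x,y)$ in the domain of $g$; equivalently $\Zq{y}-\Zq{x}\geq \Zqprime{b^\ast}(y-x-\beta)$. Dividing by $\Zqprime{b^\ast}$ gives the claim whenever $0\leq x<y\leq b^\ast$ with $y>x+\beta$, while the case $y\leq x+\beta$ follows from monotonicity of $w$ (recall $w^\prime=\Zqprime{\cdot}/\Zqprime{b^\ast}\geq 0$), since then $w(y)-w(x)\geq 0\geq y-x-\beta$. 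The remaining configurations, with $b^\ast$ between $x$ and $y$ or both above $b^\ast$, reduce to these by additivity of the increments and the fact that $w$ has unit slope on $(b^\ast,\infty)$.

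For the inequality $(\Gamma-q-p\ind_{(-\infty,0)})w\leq 0$ a.e., I would split at $b^\ast$. On $(-\infty,b^\ast)$, the integral defining $\Gamma w(x)$ only samples $w$ at points $x-z<b^\ast$, where $w$ equals the multiple of $\Zq{\cdot}$; since $\Zq{\cdot}$ is annihilated by $\Gamma-q-p\ind_{(-\infty,0)}$ (the harmonic property of the Parisian scale function underlying the proof of Lemma~\ref{verificationlemma} and the cited first-passage theory), the inequality holds there with equality. On $(b^\ast,\infty)$, writing $d:=w-\Zq{\cdot}/\Zqprime{b^\ast}$, one has $d\equiv 0$ on $(-\infty,b^\ast]$ and $d^\prime(x)=1-\Zqprime{x}/\Zqprime{b^\ast}$; since $\Zqprime{\cdot}$ is strictly increasing on $(c^\ast,\infty)\supseteq(b^\ast,\infty)$ we get $d\leq 0$, and using $(\Gamma-q)\,\Zq{\cdot}=0$ on $(0,\infty)$ the task reduces to showing $(\Gamma-q)w(x)=(\Gamma-q)d(x)\leq 0$ for $x>b^\ast$.

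The main obstacle is precisely this last inequality. Here I would follow the argument of Loeffen~\cite{loeffen_2009b}, whose sign control on the dividend region rests on convexity and monotonicity of the derivative of the scale function; in the present Parisian setting this role is played by Lemma~\ref{lemma:log-convexity}, which guarantees that $\Zqprime{\cdot}$ is log-convex (hence convex, so $\Zqdoubleprime{\cdot}\geq 0$) and increasing past $c^\ast$. Concretely, I would verify that $x\mapsto(\Gamma-q)w(x)$ is non-positive at $b^\ast+$ --- the Lévy-integral and first-order terms there coincide with those of $\Zq{\cdot}$, leaving only $-\tfrac{\sigma^2}{2}\Zqdoubleprime{b^\ast}/\Zqprime{b^\ast}\leq 0$ --- and that it does not increase thereafter, the monotonicity of $\Zqprime{\cdot}$ supplying the requisite sign of the integral contribution. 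Once both inequalities of Lemma~\ref{verificationlemma} are established, that lemma yields that $(a^\ast,b^\ast)$ is optimal and $v_{a^\ast,b^\ast}=v_\ast$, completing the proof.
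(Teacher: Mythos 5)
Your proposal follows essentially the same route as the paper: apply the verification lemma to $v_{a^\ast,b^\ast}$ simplified via the optimality identity \eqref{E:identity-at-optimality}, establish the transaction-cost inequality from the minimality of $g$ (the argument of Lemma~5 in \cite{loeffen_2009b}), use the harmonicity of $\Zq{\cdot}$ below $b^\ast$, and close with the standard sign argument on $(b^\ast,\infty)$ resting on the smoothness and monotonicity of $\Zqprime{\cdot}$ past $c^\ast$ supplied by log-convexity. The paper's own proof is terser --- it simply cites \cite{renaud_2019} and \cite{loeffen_2009b} and declares the rest standard --- so your write-up is a correct, more detailed rendition of the same argument.
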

\begin{proof}
Using~\eqref{E:identity-at-optimality} with $(a^\ast,b^\ast)$, we can simplify the expression of the performance function for the $(a^\ast,b^\ast)$-strategy, as given in Lemma~\ref{L:performance}.

It was proved in \cite{renaud_2019} that $\Gamma \Zq{x} - \left( q+p \ind_{(-\infty,0)}(x) \right) \Zq{x} = 0$ for all $x \in \reals$. Also, since $\Zq{\cdot}$ is continuously differentiable under the assumption on the Lévy measure, we can use the same arguments as in Lemma~5 in \cite{loeffen_2009b} to prove that $v_{a^\ast,b^\ast}(y)-v_{a^\ast,b^\ast}(x) \geq y-x-\beta$, for all $y \geq x \geq 0$.

The rest is standard due to the fact that, under the assumption on the Lévy measure, $\Zqprime{\cdot}$ is sufficiently smooth and strictly increasing on $(c^\ast,\infty)$.
\end{proof}

\section*{Acknowledgements}

Funding in support of this work was provided by a Discovery Grant from the Natural Sciences and Engineering Research Council of Canada (NSERC).

%
%
\bibliographystyle{abbrv}
\bibliography{references-SNLPs.bib}

\end{document}